\def\uglyO{y}
\def\draft{n}
\newtheorem{thm}{Theorem}[section]
\newtheorem{prop}[thm]{Proposition}
\newtheorem{lem}[thm]{Lemma}
\newcommand{\theoremname}{Theorem:}
  \theoremstyle{definition}
  \newtheorem*{claim*}{Claim}
  \newtheorem*{question*}{Question}
  \newtheorem*{answer*}{Answer}
  \newtheorem*{application*}{Application}
  \newenvironment{dproof}[2] {\paragraph{\emph{Proof of {#1} {#2}.}}}{\hfill$\square$}
  \theoremstyle{remark}
  \newtheorem*{rmk*}{Remark}
\newcommand{\Q}{\mathbb{Q}}
\newcommand{\R}{\mathbb{R}}
\newcommand{\Z}{\mathbb{Z}}
\def\tilO{{\tilde{O}}}
\def\draftcut{\if\draft y \cleardoublepage \fi}
\def\ifugly#1#2{\if\uglyO y{#1}\else{#2}\fi}
\def\ifug#1{\ifugly{\tilO(#1)}{\sim #1}}
\begin{document}

\title{Computing finite type invariants efficiently}

\author{Dror Bar-Natan}
\address{University of Toronto}
\email{drorbn@math.toronto.edu}
\urladdr{\url{http://www.math.toronto.edu/drorbn}}

\author{Itai Bar-Natan}
\address{University of California, Los Angeles}
\email{itaibn@math.ucla.edu}
\urladdr{}

\author{Iva Halacheva}
\address{Northeastern University}
\email{i.halacheva@northeastern.edu}
\urladdr{\url{https://sites.google.com/site/ivahalacheva3/}}

\author{Nancy Scherich}
\address{Elon University}
\email{nscherich@elon.edu}
\urladdr{\url{http://www.nancyscherich.com}}

\keywords{Finite type invariants, Gauss diagrams}

\thanks{}

\begin{abstract}
We describe an efficient algorithm to compute finite type invariants of type $k$ by first creating, for a given knot $K$ with $n$ crossings, a look-up table for all subdiagrams of $K$ of size $\lceil \frac{k}{2}\rceil$ indexed by dyadic intervals in $[0,2n-1]$. Using this algorithm, any such finite type invariant can be computed on an $n$-crossing knot in time $\ifug{n^{\lceil\frac{k}{2}\rceil}}$, a lot faster than the previously best published bound of $\ifug{n^k}$.
\end{abstract}

\maketitle


\section{Introduction}

Finite type invariants, also known as Vassiliev invariants  \cite{Vassiliev90, Vassiliev}, underlie many of the classical knot invariants, for instance they include the coefficients of the Jones, Alexander, and more generally HOMFLY-PT polynomials \cite{BL93, BN1}.
A knot invariant $\zeta$ is said to be of \emph{finite type $k$} (equivalently of degree $k$) if it vanishes on all knots with at least $k+1$ double points, where $\zeta$ is extended to knots with double points by the formula:
$$\zeta(\doublepoint)=\zeta(\overcrossing) - \zeta(\undercrossing).$$
For example, the linking number of a two-component knot is a finite type invariant of type 1. 
In our main theorem, we provide an algorithm to compute finite type invariants from a planar projection of a knot in a surprisingly efficient time depending on the crossing number of the knot diagram.

For complexity measurements in this paper, we care only about polynomial degree  (i.e.  we ignore constants and $\log (n)$ terms).
\ifugly{In this vein, recall that $f(n)=\tilO(g(n))$ means that there exist positive numbers $c,p,N$ so that for all $n>N$, we have that $f(n)<cg(n)(\log g(n))^p$.}{We write $f(n)\sim g(n)$ to mean that there exist positive numbers $c,p,N$ so that for all $n>N$, we have that 
\[\frac{1}{c}g(n)(\log(n))^{-p}<f(n)<cg(n)(\log(n))^p.\]
For example, for us, $n^4\sim {5.4}\; {n^4}(\log(n))^8$.} 

\vspace{.2cm}

\noindent \textbf{Main Theorem.} \textit{Finite type invariants of type $k$ can be computed on an $n$-crossing knot in \ifugly{time $\tilO(n^{\lceil k/2\rceil})$.}{time at most $\sim n^{\lceil k/2\rceil}$.}}
\vspace{.2cm}

The result is a surprising result, as before this theorem the fastest algorithm (known to the authors) to compute a type $k$ invariant on a knot diagram with $n$ crossings took time $\ifug{n^k}$ \cite{BBHS}, and it was commonly believed that this was the fastest possible. 
There are specific finite type invariants which can be computed much faster, such as the linking number and the coefficients of the Alexander polynomial, but these are special cases.
The Main Theorem gives the current fastest known algorithm that works for \emph{all} finite type invariants, and shows that the computational time can be reduced to roughly the square root of the previously fastest known algorithm.
In a previous paper \cite{BBHS}, we proved that finite type invariants can be computed efficiently using 3D methods. We argued that most knot invariants, including finite type invariants, should be more efficiently computed using 3D methods rather than 2D methods.
However, the Main Theorem is significant as it is a 2D method that currently outperforms all known 3D methods to compute finite type invariants.


For impatient readers, the key formula in this paper is Equation (\ref{eqn:Finalformula}). The preceding pages include the definitions leading up to the formula, and the proof that the formula can be evaluated in time $\ifug{n^{\lceil k/2 \rceil}}$.
The proof uses a space-time tradeoff and the Main Theorem can be generalized to highlight the space requirements.

\vspace{.2cm}
\noindent \textbf{Space-Aware Version of the Main Theorem.} \textit{For any integer $f\leq\frac{k}{2}$, any finite type invariant of type $k$ can be computed on an $n$-crossing knot in time $\ifug{n^{k-f}}$ using storage space \ifugly{$\tilO(n^f)$}{at most $\sim{n^f}$}.}
\vspace{.2cm}


\vspace{2mm}
\noindent \textbf{Acknowledgements.}
 The first author was supported by NSERC-RGPIN-2018-04350 and by the Chu Family Foundation (NYC). The third author was supported by the Natural Science Foundation Grant No. DMS-2302664. This material is also based upon work supported by the National Science Foundation under Grant No. DMS-1929284 while the fourth author was in residence at the Institute for Computational and Experimental Research in Mathematics in Providence, RI, during the Braids program. 
 This project is partially sponsored by the Provost Office of Elon University.
 We would like to thank ICERM for hosting the first and third authors for a week long visit. Finally we thank A.~Referee for their extremely valuable comments.

\section{Background}

\subsection{Gauss diagrams}

A \emph{Gauss diagram} of an $n$-crossing long knot diagram parametrized by an open interval $I_\R\subseteq\mathbb{R}$ is given by the interval $I_\R$ along with $n$ decorated arrows (equivalently, oriented perfect matchings of $2n$ points, where the arrows are the edges of the matching). Usually $I_\R=(-1,2n)$ but $I_\R$ can be any open interval.  
To distinguish between intervals of real numbers  and integers, we use the notation of a subscript of $\R$ or $\Z$ on the interval. So $I_\Z=I_\R\cap\Z$.
Each arrow corresponds to one of the $n$ crossings of the knot and has endpoints at the integer values in $I_\R$ (the endpoints are in $I_\Z)$. 
The head of an arrow is at the point in $I_\Z$ which parametrizes the lower strand of the crossing and the tail of the arrow is at the point which parametrizes the upper strand of the crossing (we assume all crossing points occur at the integer location of the paramaterization). 
Each arrow is decorated with a sign corresponding to the sign of the crossing. We say that such a diagram is \emph{numbered} by $I_\Z$, where usually $I_\Z=[0,2n)_\Z$.
Figure \ref{fig:gaussdiagrams} (A) shows an example of a Gauss diagram.
For a Gauss diagram $D$, the quantity $|D|$ is the number of arrows in $D$.
Let $\mathcal{GD}=\langle$Gauss diagrams$\rangle$ denote the $\Z$-module of $\Z$-linear combinations of Gauss diagrams, and let $\mathcal{GD}_k$ denote the subspace spanned by Gauss diagrams with $k$ or fewer arrows.

\begin{figure}[]

\centering
\begin{picture}(380,270)
\put(28,-4){\includegraphics[width=.7\textwidth]{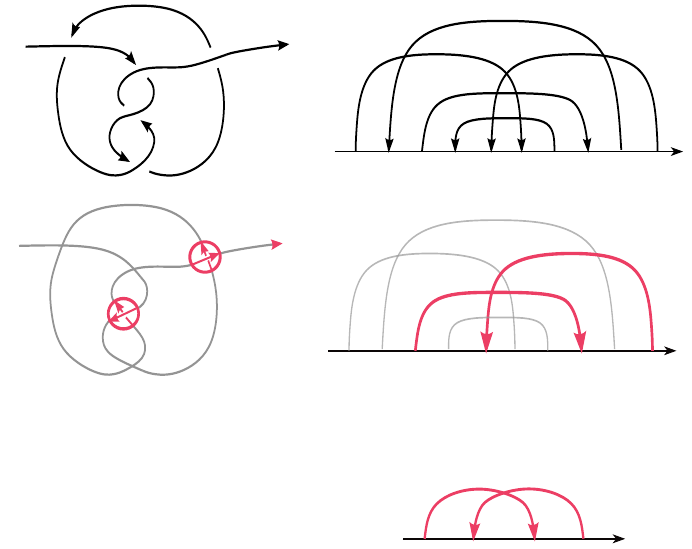}}
\put(30,218){\textcolor{magenta}{\textbf{-1}}}
\put(54,217){\textcolor{magenta}{\textbf{0}}}
\put(97,198){\textcolor{magenta}{\textbf{1}}}
\put(83,185){\textcolor{magenta}{\textbf{2}}}
\put(94,154){\textcolor{magenta}{\textbf{3}}}
\put(111,219){\textcolor{magenta}{\textbf{4}}}
\put(56,207){\textcolor{magenta}{\textbf{5}}}
\put(86,163){\textcolor{magenta}{\textbf{6}}}
\put(72,192){\textcolor{magenta}{\textbf{7}}}
\put(85,205){\textcolor{magenta}{\textbf{8}}}
\put(122,211){\textcolor{magenta}{\textbf{9}}}
\put(156,217){\textcolor{magenta}{\textbf{10}}}
\put(185,163){\textcolor{magenta}{\textbf{0}}}
\put(200,163){\textcolor{magenta}{\textbf{1}}}
\put(215,163){\textcolor{magenta}{\textbf{2}}}
\put(228,163){\textcolor{magenta}{\textbf{3}}}
\put(243,163){\textcolor{magenta}{\textbf{4}}}
\put(260,163){\textcolor{magenta}{\textbf{5}}}
\put(275,163){\textcolor{magenta}{\textbf{6}}}
\put(290,163){\textcolor{magenta}{\textbf{7}}}
\put(305,163){\textcolor{magenta}{\textbf{8}}}
\put(317,163){\textcolor{magenta}{\textbf{9}}}

\put(210,73){\textcolor{magenta}{\textbf{2}}}

\put(241,73){\textcolor{magenta}{\textbf{4}}}

\put(286,73){\textcolor{magenta}{\textbf{7}}}

\put(316,73){\textcolor{magenta}{\textbf{9}}}
\put(185,207){\Large -}
\put(250,235){\Large -}
\put(230,200){\Large -}
\put(240,187){\Large -}
\put(315,207){ +}
\put(230,110){\Large -}
\put(315,112){ +}
\put(145,185){$\longleftrightarrow$}
\put(145,100){$\longleftrightarrow$}
\put(345,175){(A)}
\put(345,80){(B)}
\put(345,0){(C)}
\put (250, 60) {\rotatebox{-90}{ $\longrightarrow$}}
\put(260, 45){$\psi$}
\put(214,-8){\textcolor{magenta}{\textbf{0}}}

\put(236,-8){\textcolor{magenta}{\textbf{1}}}

\put(265,-8){\textcolor{magenta}{\textbf{2}}}

\put(286,-8){\textcolor{magenta}{\textbf{3}}}

\put(220,20){\Large -}
\put(275,20){ +}
\end{picture}
\caption{(A) An example of the Gauss diagram of a long knot diagram.  (B) A $2$-arrow subdiagram of a Gauss diagram. (C) The forgetful map $\psi$ applied to a subdiagram yielding a renumbered subdiagram.}
\label{fig:gaussdiagrams}

\end{figure}

Let $D$ be a Gauss diagram with $n$ arrows numbered by $[0,2n)_\Z$. 
A \emph{$k$-arrow subdiagram} of  $D$ is a diagram consisting of $I_\R$ and a subset of $k$ decorated arrows from $D$.  
A subdiagram corresponds to a choice of $k$ crossings in the knot diagram represented by the Gauss diagram $D$. 
An example is shown in Figure \ref{fig:gaussdiagrams}(B).
Notice that a subdiagram $D'$ of $D$ keeps the original numbering along the interval $I_\R$ and the $2k$ endpoints of $D'$ will be spread out amongst the $2n$ points in $I_\R$.
A $k$-arrow subdiagram of $D$ is \emph{not} a proper Gauss diagram because of this numbering issue.
To make a $k$-arrow subdiagram $D'$ into a Gauss diagram, we can apply the forgetful map $\psi$ to $D'$ which renumbers $[0,2n)_\Z$ monotonically to $[0,2k)_\Z$, in essence forgetting how $D'$ was realized as a subdiagram of $D$. 
We will call $\psi(D')$ a \emph{renumbered subdiagram} of $D$.
An example is shown in Figure \ref{fig:gaussdiagrams}(C).

Given two disjoint subdiagrams $E$ and $F$ of a Gauss diagram $D$, with $e=|E|$ and $f=|F|$, we say that the \emph{pattern} $P=\pi_D(E,F)$ of $E$ and $F$ within $D$ is the $2e$-element subset of $[0,2(e+f))_\Z$ numbering the ends of the images of the arrows of $E$ within $\psi(E\cup F)$ (and so the complementary subset $[0,2(e+f))_\Z\setminus P$ numbers the arrow ends of $F$). In the special case where $D=E\cup F$, we simply write $P=\pi(E,F)$.

Conversely, if $E$ and $F$ are Gauss diagrams with $e=|E|$ and $f=|F|$ and $P$ is an appropriately-sized \emph{pattern} (namely, $P\subset[0,2(e+f))_\Z$ and $|P|=2e$), then there is a unique Gauss diagram $D$ with $|D|=e+f$ containing $E$ and $F$ as disjoint subdiagrams such that $\pi_D(E,F)=P$. We denote $D=E\#_PF$ and say that $D$ is the \emph{superimposition} of $E$ and $F$ with pattern $P$. We extend $\#_P$ to a bilinear map $\#_P:\mathcal{GD}_e\times \mathcal{GD}_f\rightarrow \mathcal{GD}_{e+f}$. Two examples are shown in Figure~\ref{fig:superimposition}.

\begin{figure}[]
\scalebox{.85}{
\centering
\begin{picture}(320,160)
\put(-40,10){\includegraphics[width=.9\textwidth]{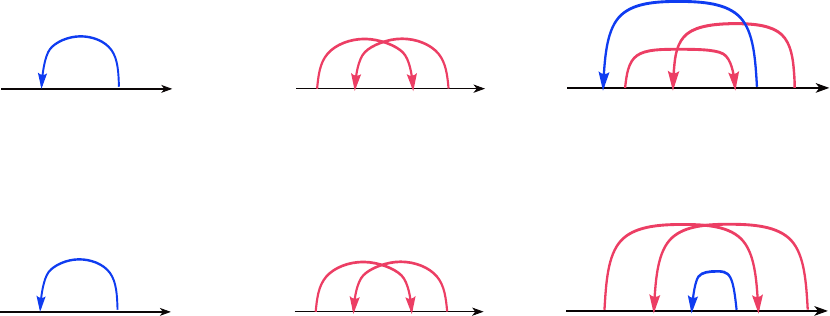}}

\put(-23,2){\textcolor{blue}{\textbf{0}}}
\put(12,2){\textcolor{blue}{\textbf{1}}}

\put(110,2){\textcolor{red}{\textbf{0}}}
\put(128,2){\textcolor{red}{\textbf{1}}}
\put(155,2){\textcolor{red}{\textbf{2}}}
\put(170,2){\textcolor{red}{\textbf{3}}}

\put(43,40){\Large $P_2=\{2,3\}$}

\put(60,10){\huge $\#_{P_2}$}
\put(205,10){\huge $=$}

\put(247,2){0}
\put(270,2){1}
\put(285,2){2}
\put(306,2){3}
\put(320,2){4}
\put(340,2){5}
\put(-23,108){\textcolor{blue}{\textbf{0}}}
\put(12,108){\textcolor{blue}{\textbf{1}}}

\put(110,108){\textcolor{red}{\textbf{0}}}
\put(128,108){\textcolor{red}{\textbf{1}}}
\put(155,108){\textcolor{red}{\textbf{2}}}
\put(170,108){\textcolor{red}{\textbf{3}}}

\put(43,146){\Large $P_1=\{0,4\}$}

\put(60,116){\huge $\#_{P_1}$}
\put(205,116){\huge $=$}

\put(242,108){0}
\put(255,108){1}
\put(278,108){2}
\put(306,108){3}
\put(318,108){4}
\put(335,108){5}

\end{picture}
}
\caption{Two examples of superimposing the same diagrams along different patterns.}
\label{fig:superimposition}

\end{figure}


We denote by $\varphi_k : \{$knot diagrams$\}\rightarrow \mathcal{GD}_k$ the map which sends a knot diagram to the sum of all of the \emph{renumbered} subdiagrams of its Gauss diagram which have exactly $k$ arrows. Note that $\varphi_k=\psi\circ \bar{\varphi}_k$ where $\bar{\varphi}_k$ sends a knot diagram to the sum of all of the subdiagrams of its Gauss diagram which have exactly $k$ arrows, and $\psi$ renumbers each summand to make it a Gauss diagram.
Let $\varphi_{\leq k}:=\sum_{i=1}^k\varphi_i$ be the map that sends a knot diagram to the sum of all of the renumbered subdiagrams of its Gauss diagram which have \emph{at most} $k$ arrows. 
The maps $\varphi_{k}$ and $\varphi_{\leq k}$ are \emph{not}  invariants of knots but every finite type invariant factors through $\varphi_{\leq k}$, as follows from the next theorem.

\begin{thm}[Goussarov-Polyak-Viro \cite{GPV}, see also \cite{Roukema}]\label{thm:FactorsThruPhi} 
A $\Q$-valued knot invariant $\zeta$ is of type $k$ if and only if there is a linear functional $\omega$ on $\Q\otimes\mathcal{GD}_k$ such that $\zeta=\omega\circ\varphi_{\leq k}$.
\end{thm}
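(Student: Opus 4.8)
The plan is to prove the two implications separately: the \textbf{if} direction is a short cancellation argument, while the converse carries essentially all of the content. Throughout I extend $\varphi_{\le k}$ to singular knot diagrams by applying the Vassiliev relation $\doublepoint = \overcrossing - \undercrossing$ at each double point. The key preliminary observation, which I would record first, is a cancellation principle: if a diagram is modified only by switching a single crossing $c$, then the two Gauss diagrams agree away from the arrow $a_c$ of $c$, so in the difference $\varphi_{\le k}(\overcrossing_c)-\varphi_{\le k}(\undercrossing_c)$ every reparametrized subdiagram \emph{not} containing $a_c$ cancels. Iterating this over a diagram with double points $c_1,\dots,c_m$, the only reparametrized subdiagrams surviving in $\varphi_{\le k}$ of the fully resolved diagram are those containing all of $a_{c_1},\dots,a_{c_m}$, and such subdiagrams use at least $m$ arrows. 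Hence if $m\ge k+1$ every surviving subdiagram has more than $k$ arrows and is discarded, so $\varphi_{\le k}$ vanishes on all knots with at least $k+1$ double points. If $\zeta=\omega\circ\varphi_{\le k}$, applying $\omega$ shows $\zeta$ vanishes there too, so $\zeta$ is of type $k$.

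For the converse, let $\mathcal{K}$ be the $\Q$-vector space spanned by knot diagrams and $\mathcal{K}_m\subseteq\mathcal{K}$ the subspace spanned by resolutions of diagrams with at least $m$ double points, so that $\zeta$ is of type $k$ precisely when $\zeta$ vanishes on $\mathcal{K}_{k+1}$. The cancellation principle gives $\varphi_{\le k}(\mathcal{K}_{k+1})=0$, so $\varphi_{\le k}$ descends to $\bar\varphi\colon \mathcal{K}/\mathcal{K}_{k+1}\to\mathcal{GD}_k$. My strategy is to prove that $\bar\varphi$ is injective. Granting this, I define $\omega$ on the image $\varphi_{\le k}(\mathcal{K})$ by $\omega(\varphi_{\le k}(K)):=\zeta(K)$; this is well defined because $\varphi_{\le k}(K)=\varphi_{\le k}(K')$ forces $K\equiv K'$ modulo $\mathcal{K}_{k+1}$, whence $\zeta(K)=\zeta(K')$ since $\zeta$ is of type $k$. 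I then extend $\omega$ to all of $\mathcal{GD}_k$ by choosing any linear complement of the image, which is possible since we work over the field $\Q$. This produces $\zeta=\omega\circ\varphi_{\le k}$, as required.

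To prove $\bar\varphi$ injective I would exploit that $\mathcal{GD}$ is graded by number of arrows, $\mathcal{GD}=\bigoplus_m\mathcal{GD}^{(m)}$ with $\mathcal{GD}^{(m)}$ spanned by Gauss diagrams with exactly $m$ arrows, together with the finite decreasing filtration $\mathcal{K}/\mathcal{K}_{k+1}=\mathcal{K}_0/\mathcal{K}_{k+1}\supseteq\cdots\supseteq\mathcal{K}_{k+1}/\mathcal{K}_{k+1}=0$ on the source. By the cancellation principle, $\varphi_{\le k}$ sends the resolution of an $m$-fold singular diagram into $\bigoplus_{j\ge m}\mathcal{GD}^{(j)}$, with lowest-degree (leading) part in $\mathcal{GD}^{(m)}$ a specific signed, reparametrized arrow-diagram combination determined by its $m$ double points. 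Thus $\bar\varphi$ is triangular with respect to these gradings, and since the source filtration is finite it is injective as soon as each induced leading-term map $\mathrm{gr}_m\colon \mathcal{K}_m/\mathcal{K}_{m+1}\to\mathcal{GD}^{(m)}$ is injective for $m\le k$. I expect this graded injectivity to be the main obstacle. It amounts to showing that the leading arrow-diagram combinations attached to $m$-fold singular knots are linearly independent in $\mathcal{GD}$, which requires the genuinely combinatorial input of the Goussarov--Polyak--Viro analysis: that modulo $(m+1)$-fold singular knots a singular knot is faithfully recorded by its decorated arrow diagram, combined with the freeness of $\mathcal{GD}$ on Gauss diagrams. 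This is exactly where the realizability subtleties of arrow diagrams must be confronted, and is the technical heart I would cite from \cite{GPV, Roukema}.
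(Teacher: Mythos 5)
The paper does not actually prove this statement---it is imported wholesale from Goussarov--Polyak--Viro \cite{GPV}---so there is no internal proof to compare against; I can only assess your argument on its own terms. Your ``if'' direction is the standard cancellation argument and is correct. The ``only if'' direction, however, rests on a claim that is false: $\bar\varphi\colon\mathcal{K}/\mathcal{K}_{k+1}\to\mathcal{GD}_k$ is \emph{not} injective, and already your first graded map $\mathrm{gr}_0\colon\mathcal{K}_0/\mathcal{K}_1\to\mathcal{GD}^{(0)}$ fails: a crossing switch preserves the underlying undecorated chord diagram (the ``shadow''), so $\mathcal{K}_0/\mathcal{K}_1$ is the free vector space on shadows, which is infinite dimensional, while $\mathcal{GD}^{(0)}$ is one dimensional. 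Injectivity of $\bar\varphi$ itself also fails: take $k=1$ and two realizable $2$-crossing long knot diagrams $K$ and $K'$ whose Gauss diagrams carry the same two decorated arrows but interleaved differently (disjoint chords for $K$, nested chords for $K'$; realize these by two consecutive kinks versus a kink inside a kink, with matching signs and orientations). Since $\psi$ forgets how a $1$-arrow subdiagram sits inside the ambient diagram, $\varphi_{\le 1}(K)=\varphi_{\le 1}(K')$; but $K-K'\notin\mathcal{K}_2$, because every generator of $\mathcal{K}_2$ is a full resolution of a singular diagram and hence is supported on a single shadow class with coefficient sum zero there, whereas $K-K'$ has coefficient $1$ on the shadow class of $K$. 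So your definition of $\omega$ on the image of $\varphi_{\le k}$ is not justified by the route you propose.

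The root cause is that your hard direction never uses that $\zeta$ is a knot \emph{invariant}: the only properties of $\zeta$ you invoke are linearity and vanishing on $\mathcal{K}_{k+1}$. If your argument were correct it would prove that \emph{every} linear functional on the span of diagrams vanishing on $\mathcal{K}_{k+1}$ factors through $\varphi_{\le k}$, and the example above shows this is false (a functional separating $K$ from $K'$ modulo $\mathcal{K}_2$ cannot factor through $\varphi_{\le 1}$). What must actually be shown is the containment $\ker\bigl(\varphi_{\le k}|_{\mathcal{K}}\bigr)\subseteq\ker\zeta$, and this genuinely requires combining vanishing on $\mathcal{K}_{k+1}$ with Reidemeister invariance of $\zeta$; that interplay---not a realizability technicality about arrow diagrams---is the content of the Goussarov--Polyak--Viro theorem, and it cannot be quarantined into a final citation while the surrounding linear algebra is presented as the proof.
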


We show that $\varphi_{\leq k}$ can be computed in time $\ifug{n^{\lceil k/2 \rceil}}$. This result, combined with the above theorem, proves that all finite type invariants of type $k$ can be computed in time $\ifug{n^{\lceil k/2 \rceil}}$, which is the main result of this paper.

It is surprising that $\varphi_{\leq k}$ can be computed in time $\ifug{n^{\lceil k/2 \rceil}}$ because, at first glance, it would seem that one must require \ifugly{time $\tilO(n^k)$.\footnote{We slightly abuse notation and use $\tilO$ also to mean ``equal up to constants and logarithms''.}}{time $\sim n^k$.} A Gauss diagram with $n$ arrows has $\sum_{i=1}^k{n\choose i}$ subdiagrams with $k$ or fewer arrows. Because \ifugly{${n\choose i}=\tilO(n^i)$}{${n\choose i}\sim n^i$}, a Gauss diagram with $n$ arrows has \ifugly{$\sum_{i=1}^k$ ${n\choose i}=\tilO\left(\sum_{i=1}^k n^i\right)=\tilO(n^k)$}{$\sum_{i=1}^k$ ${n\choose i}\sim \sum_{i=1}^k n^i\sim n^k$} subdiagrams with $k$ or fewer arrows. 
So $\bar{\varphi}_{\leq k}$ evaluated on a knot diagram with $n$ crossings will be a sum of $\ifug{n^k}$ subdiagrams.
Note that the outputs of $\bar{\varphi}_k$ and  $\varphi_k$ have the same number of summands, but $\varphi_k$ will have repeated terms and $\bar{\varphi}_k$ will not.
So where do the computational savings come from?
The idea is to break the computation of $\varphi_{\leq k}$ into two parts, one of which can be quickly pre-computed in a look-up table. The creation of this look-up table uses counting techniques taking advantage of dyadic intervals. These techniques are completely self-contained, and unrelated to finite type invariants and knot theory. In the next section, we describe these techniques that we will apply to prove that that $\varphi_{\leq k}$ can be computed in time $\ifug{n^{\lceil k/2 \rceil}}$.

\section{Computational Preliminaries: Counting Techniques using Dyadic intervals}

\subsection{Counting with a look-up table.}
For the purpose of this paper, a \emph{look-up table} is a lexicographically-ordered list of (key $\mapsto$ value) entries, or more formally a lexicographically-ordered associative array. 
Below, Theorem \ref{thm:table1} shows how to use a look-up table to count elements of a set inside $[n]^\ell$, where $[n]:=[1,n]_\Z$. While we will need a generalized version of this theorem, the proof of Theorem \ref{thm:table1} showcases nicely how dyadic intervals are used to get computational savings. 

\begin{thm}\label{thm:table1}
    Let $Q$ be an enumerated subset of $[n]^\ell$ with $|Q|=q=n^f$, where $0\leq f\leq\ell$ (typically $0<f<\ell$ and so $Q$ is ``big'' yet ``much smaller'' than $[n]^\ell$). In time $\ifug{q}$, a look-up table of size $\ifug{q}$ can be created so that computing $|Q\cap R|$ will take time $\ifug{1}$ for any rectangular box $R\subset [n]^\ell$.
\end{thm}

The straightforward approach to this theorem would be to create a look-up table with key-value pairs of the form $(R \mapsto |Q\cap R|)$ for all possible rectangular boxes $R$. A rectangular box in $[n]^\ell$ is determined by choosing two interval endpoints in each coordinate of $[n]^\ell$, so there are $\ifug{n^{2\ell}}$ possible rectangular boxes in $[n]^\ell$. This look-up table would take much longer than $\ifug{q}$ to create.
The trick is to create a restricted look-up table using only rectangular boxes which intersect $Q$ non-trivially and whose sides are dyadic intervals. The structure is reminiscent of the data structures quadtrees / octrees which are used in computer graphics.

To prove Theorem \ref{thm:table1}, let us first start with some preliminaries on dyadic intervals.

\subsection{Dyadic intervals}\label{sec:dyadic}

For the purpose of this paper, a \emph{dyadic interval} is a half-closed interval of the form $\left[2^pq,2^{p}(q+1)\right)_\Z$ for $p,q \in \Z_{\ge 0}$.
A dyadic interval can be expressed via a binary expansion up to a certain accuracy, i.e. a fixed sequence of $0$'s and $1$'s followed by a fixed number of $*$'s or `free' entries, and the integers contained in the interval are all possible completions of the expansions, i.e. all possible ways of replacing the $*$'s with $0$'s and $1$'s.
For example $[2^22,2^23)_\Z=\{8,9,10,11\}=\{1000_2,1001_2,1010_2,1011_2\}$, expressed in decimal and binary expansion respectively. Using the binary expansion, every number in the interval is of the form $10\!*\!*\!$ where the least significant two bits are free and the most significant two bits are fixed to be 10.
We write $u$ for a dyadic interval where $u$ is a binary sequence followed by some number of $*$'s.

A dyadic interval $u$ is \emph{maximal} in  $(b,c)_\Z$ if $u\subset (b, c)_\Z$ and any larger dyadic interval containing $u$ is not contained in $(b, c)_\Z$. 
Notice if two dyadic intervals overlap, then one is contained in the other. 
Therefore, the maximal dyadic intervals of $(b,c)_\Z$ are disjoint.
Every interval decomposes uniquely as a disjoint union of maximal dyadic intervals, as described in the following elementary Lemma.

\begin{lem}\label{lem:dyadic}
For any interval $(b,c)_\Z\subset \Z_{\ge 0}$, there are at most $2 \log_2(c-b)$ maximal dyadic intervals contained in $(b,c)_\Z$, and $(b,c)_\Z$ is a disjoint union of its maximal dyadic intervals.
\end{lem}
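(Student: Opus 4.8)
The plan is to establish the two assertions separately: first that the maximal dyadic intervals \emph{partition} $(b,c)$, and then the counting bound. The partition statement is essentially immediate from the facts already recorded above. Every integer $x\in(b,c)$ is itself a dyadic interval, namely the singleton $[x,x+1)=[2^0x,2^0(x+1))$, and this singleton is contained in $(b,c)$. Hence $x$ lies in at least one dyadic interval contained in $(b,c)$, and therefore in a maximal one: enlarge the singleton to the largest dyadic interval containing $x$ that is still contained in $(b,c)$, which is maximal by definition. Thus the maximal dyadic intervals cover $(b,c)$, and since any two of them are disjoint (two overlapping dyadic intervals are nested, so two overlapping maximal ones would force one to contain the other, contradicting maximality), they form a partition of $(b,c)$.

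For the counting bound, I would prove the key claim that \emph{for each length $2^p$ there are at most two maximal dyadic intervals of that length in $(b,c)$}. First I record a convenient reformulation of maximality: a dyadic interval $D\subseteq(b,c)$ is maximal if and only if its dyadic sibling $D'$ (the other length-$2^p$ child of its parent, the dyadic interval of length $2^{p+1}$ containing $D$) is \emph{not} contained in $(b,c)$; indeed the parent equals $D\cup D'$, so the parent lies in $(b,c)$ exactly when $D'$ does. Now suppose for contradiction that three maximal dyadic intervals $J_1,J_2,J_3$ of length $2^p$ occur, in left-to-right order, and focus on the middle one $J_2=[2^pq,2^p(q+1))$. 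If $q$ is even, then the sibling of $J_2$ is the block immediately to its right, and by maximality this sibling must stick out past the right end of $(b,c)$; but $J_3$ is a length-$2^p$ dyadic interval lying to the right of $J_2$ and still inside $(b,c)$, which forces that same region to lie within $(b,c)$, a contradiction. If $q$ is odd, the sibling of $J_2$ is the block immediately to its left and must stick out past the left end of $(b,c)$, which is contradicted in the same way by $J_1$. Hence there are at most two maximal intervals per length.

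Finally I would assemble the bound. A dyadic interval of length $2^p$ contained in $(b,c)$ requires $2^p\le c-b$, so only the lengths with $p\le\log_2(c-b)$ can occur; there are at most $\sim\log_2(c-b)$ of them. Combining this with the at-most-two-per-length claim yields at most $2\log_2(c-b)$ maximal dyadic intervals, as required.

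I expect the only real content to be the at-most-two-per-length claim; the even/odd sibling case analysis is the heart of the argument, while the partition statement and the final summation are routine once that claim is in hand.
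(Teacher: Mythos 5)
The paper offers no proof of this lemma at all---it is dismissed as elementary, with only the disjointness observation (overlapping dyadic intervals are nested) recorded in the surrounding text---so your write-up cannot be compared against an ``official'' argument and must be judged on its own terms; on those terms it is essentially correct. The partition claim via singletons and maximal enlargement is fine, the sibling characterization of maximality is right, and the even/odd case analysis does give at most two maximal intervals per length. The one soft spot is the final assembly: the number of exponents $p$ with $2^p\le c-b-1$ is $\lfloor\log_2(c-b-1)\rfloor+1$, which exceeds $\log_2(c-b)$ whenever $c-b-1$ is a power of two (for $c-b=5$ there are three admissible lengths but $\log_2 5\approx 2.32$), so ``two per length times the number of lengths'' proves a slightly weaker constant than the one stated. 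To get $2\log_2(c-b)$ on the nose, push your sibling argument one step further: if $J=[2^pq,2^p(q+1))$ is maximal with $q$ even, its right sibling must overhang the right end of $(b,c)$, so \emph{everything} to the right of $J$ inside $(b,c)$ has total length less than $2^p$; hence the maximal intervals with even $q$ have strictly decreasing, in particular pairwise distinct, lengths, and since these distinct powers of two sum to at most $c-b-1$ there are at most $\log_2(c-b)$ of them. The same holds for odd $q$, and the two counts add to the stated bound. For the paper's purposes the distinction is invisible, since all logarithmic factors are absorbed into $\sim$.
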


For a binary number, a truncation process can be used to compute dyadic intervals containing that binary number.\\

\noindent \textbf{\emph{Truncation process}}: Given a binary number $x$ with $m$ bits of accuracy, starting from right to left, replacing one bit at a time with a $*$ generates a list of $m$ dyadic intervals which contain $x$.
For example, let $x=7=00111_2$ where $m=5$, then the truncation process generates the list of dyadic intervals $\{00111,0011\!*,001\!*\!*,00\!*\!**,0\!*\!*\!*\!*\}$.
If a number $x$ (in decimal notation) is at most $\ell$, then the number of bits needed  to describe $x$ in binary expansion is at most $\log_2(\ell)$, and so $x$ is contained in at most $\log_2(\ell)$ dyadic intervals coming from the truncation process with $\log_2(\ell)$ bits of accuracy.

\subsection{Proof and generalization of Theorem \ref{thm:table1}}

We proceed with the proof of Theorem~\ref{thm:table1}, as well as its generalization, Proposition~\ref{prop:table2}, which we will use in the proof of the main theorem. 
\vspace{0.3cm}

\begin{dproof}{Theorem}{\ref{thm:table1}}
A dyadic rectangular box in $[n]^\ell$ is a product of $\ell$ dyadic intervals of $[n]$.
    We describe a process to create a restricted look-up table of key-value pairs of the form $(R_d \mapsto |Q\cap R_d|)$ for only $R_d$ that are dyadic rectangles in $[n]^\ell$ and such that $|Q\cap R_d|>0$.

    To create the table, run through the enumerated elements $x\in Q$. For each $x$, from the truncation process described in Section \ref{sec:dyadic}, $x$ is contained in at most $(\log_2(n))^\ell$ dyadic rectangular boxes $R_d$ in $[n]^\ell$. In the table, increment the value for $R_d$ by 1 for each such $R_d$ containing $x$, or create such an entry if it didn't already exist. 
    Since there are $q$ elements in $Q$, creating this table takes time $\ifugly{\tilO\left(q (\log_2(n))^\ell\right)=\tilO(q)}{\sim q (\log_2(n))^\ell\sim q}$, and there are $\ifug{q}$ elements in the table. 
    Using standard binary sorting techniques, accessing and modifying values in the table takes time $\ifugly{\tilO(\log_2 q)=\tilO(1)}{\sim \log_2 q\sim 1}$.

    A general non-dyadic rectangular box $R$ in $[n]^\ell$ is the disjoint union of at most $( 2\log_2(n))^\ell$ $\ifugly{=\tilO(1)}{\sim 1}$ maximal dyadic rectangular boxes, by Lemma \ref{lem:dyadic}. To count $|Q\cap R|$, one retrieves the values in the look-up table with key each maximal dyadic rectangular box in $R$, and then sums together all those stored values to get $|Q\cap R|$.
    Since retrieval takes time $\ifug{1}$ and the sum is over $\ifug{1}$ elements, after the look-up table is completed, it takes time $\ifug{1}$ to compute $|Q\cap R|$.
\end{dproof}

Viewing $|Q\cap R|$ as the sum $\displaystyle{\sum_{x\in Q\cap R}}1$, Theorem \ref{thm:table1} can be generalized, using almost exactly the same proof, to compute weighted sums $\displaystyle{\sum_{x\in Q\cap R}}\theta_x$ where the weights $\theta_x$ are valued in some $\Z$-module.
This generalization is stated below. It is the version of Theorem~\ref{thm:table1} that will be used to prove the main result in Section \ref{sec:mainresult}.

\begin{prop}\label{prop:table2}
    Let $Q$ be an enumerated subset of $[n]^\ell$ with $|Q|=q=n^f$, where $0\leq f\leq\ell$. Let $M$ be a free $\Z$-module of rank $\ifug{1}$, and let $\theta:[n]^\ell\rightarrow M$ be a map that is zero outside of $Q$.
     Then, in time $\ifug{q}$, a look-up table of size $\ifug{q}$ can be created so that computing $\sum_R \theta $ will take time $\ifug{1}$ for any rectangular box $R\subset [n]^\ell$.\footnote{There is a small caveat to Proposition~\ref{prop:table2} which requires that in some basis for $M$, the coefficients of $\theta$ must be computable in time $\ifug{1}$ and their size must be $\ifug{1}$, which may not always be the case. However, for the purposes of this paper and the application for which this proposition is used, this issue does not arise and so we chose to leave this technical detail out of the statement. }
\end{prop}

\section{Main Result}\label{sec:mainresult}

In this section, we state and prove the main theorem of this article. 
The strategy to quickly compute $\varphi_k$ on a diagram $D$ is to view a $k$-arrow subdiagram of $D$ as the superimposition of two smaller subdiagrams $E$ and $F$.
The subdiagram $E$ is placed inside $D$ first, and instead of placing $F$, a look-up table is created to count in how many ways $F$ could have been placed. 
Using Proposition~\ref{prop:table2}, this look-up table can be computed and accessed very quickly, which ultimately gives the computational savings for the final result.

\vspace{.4cm}

\noindent \textbf{Main Theorem.} \textit{Finite type invariants of type $k$ can be computed on an $n$-crossing knot in time \ifugly{$\tilO(n^{\lceil k/2\rceil})$}{at most $\sim n^{\lceil k/2\rceil}$}.}
\vspace{.2cm}

    

\begin{proof}
By Theorem \ref{thm:FactorsThruPhi}, it suffices to show that $\varphi_{\leq k}$ can be computed in time $\ifug{n^{\lceil k/2\rceil}}$.
Since $\varphi_{ \leq k}=\sum_{i=1}^k \varphi_{i}$,
it suffices to prove that $\varphi_k$ can be computed in time $\ifug{n^{\lceil k/2\rceil}}$. This shows that $\varphi_{\leq k}$ can be computed in time \ifugly{$\tilO(k\cdot n^{\lceil k/2\rceil})=\tilO(n^{\lceil k/2\rceil})$}{$\sim k\cdot n^{\lceil k/2\rceil}\sim n^{\lceil k/2\rceil}$}.

For a knot $K$ with $n$ crossings, let $K$ be represented as a Gauss diagram with $n$ arrows. 
By definition, $\varphi_k(K)$ is the sum of all renumbered subdiagrams with exactly $k$ arrows,
\[ \varphi_k(K)=\sum_{D \subset K,\ |D|=k} \psi(D),\]
where $D\subset K$ means $D$ is a subdiagram of $K$. 

Fixing a choice of $e,f\in\mathbb{Z}_{\geq 0}$ with $e+f=k$, every $k$-arrow subdiagram $D$ can be viewed in $\binom{k}{e}$ ways as the superimposition of two smaller subdiagrams $E$ and $F$ of sizes $e$ and $f$ respectively. Note that \ifugly{$e,f,k=\tilO(1)$}{$e,f,k\sim 1$}.
As discussed above, rather than breaking a specific $k$-arrow subdiagram down as a superimposition, one can instead build up all $k$-arrow subdiagrams by first choosing an $e$-arrow subdiagram $E$ and then choosing an $f$-arrow subdiagram $F$ that lies in the complement of $E$ in $K$.
Summing over the possible choices of $E$, $F$, and superimpositions gives the next formula\footnote{We note that all the scalars in this formula and in the formulas that follow are integers of magnitude \ifugly{$\tilO(n^k)$}{at most $\sim n^k$}. Additions of such integers take time $\ifug{1}$, and hence we need not worry about the time cost of working with large integers.}:

\begin{equation}\label{eqn:EFformula}  \varphi_k(K)=\sum_{
\small \begin{tabular}{c} $D \subset K$\\ $\left|D\right|=k$\end{tabular}} \psi(D)= \binom{k}{e}^{-1}\sum_{
\small \begin{tabular}{c} $E \subset K$\\ $\left|E\right|=e$\end{tabular}}
\sum_{\small\begin{tabular}{c}\text{patterns}\\$P$\end{tabular}} \sum_{
\small \begin{tabular}{c} $F \subset K, \; \left|F\right|=f$\\ $\pi_K(E,F)=P$\end{tabular}}\psi(E)\#_P \psi(F)
\end{equation}

\noindent The sum above overcounts every $k$-arrow subdiagram $D$ of $K$ by a factor of $\binom{k}{e}$ as every choice of splitting $D$ into two pieces (i.e. a choice of $E$)  occurs exactly once in the sum. Hence we include the pre-factor $\binom{k}{e}^{-1}$.


Define $\theta_K:([0,2n-1]_\Z)^f\rightarrow \mathcal{GD}_{f}$ by

  \[  (F_0,F_1,\cdots, F_{2f-1})\mapsto\begin{cases}
  \psi(F) & \text{ if } (F_0,F_1,\cdots, F_{2f-1}) \text{ are the ends of a subdiagram } F\subset K\\
  0 & \text{otherwise}
\end{cases}
\]

Now, the innermost sum from Equation (\ref{eqn:EFformula}) can be rewritten as a sum of values of $\theta_K$ as follows:
\begin{equation}\label{eq:doubledag}
\sum_{
\small \begin{tabular}{c} $F \subset K, \; \left|F\right|=f$\\ $\pi_K(E,F)=P$\end{tabular}}\psi(E)\#_\lambda \psi(F) 
\text{ }=\text{ }
\psi(E)\#_P \left ( \sum_{\small \prod_{i=0}^{2f-1}[m^-_{i,E,P},m^+_{i,E,P}]}\theta_K \right ),
\end{equation}
where $m^-_{i,E,P}$ is the minimal possible place for the $i$'th arrow-end of a Gauss diagram $F$ within $[0,2n-1]_\Z$ given that $E$ along with $F$ should make the pattern $P$, and $m^+_{i,E,P}$ is the maximal such value. Both of these quantities are easily determined by $E$ and $P$, though we spare the reader the explicit formulas.

The upshot is that the sum on the right can be computed very quickly with a look-up table.  Proposition~\ref{prop:table2} applies by taking $Q$ to be the set of all $(F_0,F_1,\cdots, F_{2f-1})\in ([0,2n-1]_\Z)^f$ that are the endpoints of a subdiagram of $K$. Here, $|Q|=\binom{n}{f}\ifugly{=\tilO(n^f)}{\sim n^f}$ and the conclusion from Proposition~\ref{prop:table2} is that a look-up table can be created in time $\ifug{n^f}$ so that computing the sum on the right hand side of Equation (\ref{eq:doubledag}) takes time $\ifug{1}$.
 
Thus we arrive at our final equation,

\begin{equation}\label{eqn:Finalformula} \varphi_k(K)=\binom{k}{e}^{-1} \sum_{
\small \begin{tabular}{c} $E \subset K$\\ $\left|E\right|=e$\end{tabular}}\sum_{\small\begin{tabular}{c}\text{patterns}\\$P$\end{tabular}} \psi(E)\#_P \left ( \sum_{\small \prod_{i=0}^{2f-1}[m^-_{i,E,P},m^+_{i,E,P}]}\theta_K \right ).
\end{equation}

To understand the computational complexity of Equation (\ref{eqn:Finalformula}), we need to understand the complexity of each sum. We already showed that the innermost sum can be computed in time $\ifug{n^f}$ by building a look-up table from Proposition~\ref{prop:table2}.
For the middle sum, $P$ can be any subset of $[0,2(e+f))_\Z$ of size $2e$. There are $\binom{2(e+f)}{2e} = \binom{2k}{2e}\ifugly{=\tilO(1)}{\sim 1}$ such subsets. That does not add to the complexity of the total sum.

For the fixed choice of $e+f=k$, the outer sum in Equation (\ref{eqn:Finalformula}) has at most \ifugly{$\tilO\left(\binom{n}{e}\right)=\tilO(n^e)$}{$\sim\binom{n}{e}\sim n^e$} terms.  Therefore, the total time of computing $\varphi_k(K)$, which includes first creating the look-up table and then computing the sum over all subdiagrams $D \subseteq K$ of size $k$, is $\ifug{n^e +n^f}$. 
This sum can be minimized for $e=\lceil \frac{k}{2} \rceil$ and $f=\lfloor \frac{k}{2} \rfloor$, in which case we get computation time $\ifug{n^{\lceil \frac{k}{2} \rceil}}$.
\end{proof}

Note that the lookup table within the proof of the main theorem is of size $\ifug{n^f}$, where at the end, $f$ is set to be $\lfloor\frac{k}{2}\rfloor$, and so our algorithm uses storage space $\ifug{n^{\lfloor\frac{k}{2}\rfloor}}$. If storage space is limited we can set $f$ differently and get the following:\\

\noindent \textbf{Main Theorem} (space-aware version). For any integer $f\leq\frac{k}{2}$, any finite type invariant of type $k$ can be computed on an $n$-crossing knot in time $\ifug{n^{k-f}}$ using storage space \ifugly{$\tilO(n^f)$.\newline\null\hfill$\Box$}{at most $\sim n^f$.\hfill$\Box$}

\bibliographystyle{alpha}
\bibliography{efficiently}

\end{document}